\let\oldmarginpar\marginpar
\renewcommand\marginpar[1]{\oldmarginpar[\raggedleft\footnotesize #1]%
{\raggedright\footnotesize #1}}
\renewcommand{\setminus}{{\smallsetminus}}
\newcommand{\ZZ}{{\mathbb{Z}}}
\newcommand{\abs}[1]{{\left\vert #1 \right\vert}}
\theoremstyle{plain}
\newtheorem{theorem}{Theorem}[section]
\newtheorem{corollary}[theorem]{Corollary}
\newtheorem{lemma}[theorem]{Lemma}
\newtheorem{prop}[theorem]{Proposition}
\newtheorem*{namedtheorem}{\theoremname}
\newcommand{\theoremname}{testing}
\newenvironment{named}[1]{\renewcommand{\theoremname}{#1}\begin{namedtheorem}}{\end{namedtheorem}}
\theoremstyle{definition}
\newtheorem{define}[theorem]{Definition}
\begin{document}
\title{Cosmetic crossings of genus one knots}
\author[C. Balm]{Cheryl balm}
\author[E. Kalfagianni]{Efstratia Kalfagianni}

\address[]{Department of Mathematics, Michigan State University, East
Lansing, MI, 48824}

\email[]{kalfagia@math.msu.edu}

\email[]{balmcher@math.msu.edu}

\thanks{ C. B. was supported in part by NSF grant DMS--0805942  and by 
NSF/RTG grants, DMS-0353717 and  DMS-0739208.}
\thanks{E. K. was supported in part by NSF grants DMS--0805942 and DMS-1105843.}
\thanks{ \today}

\begin{abstract} We show that for genus one knots the Alexander polynomial and the homology of the double cover branching over the knot
provide obstructions to cosmetic crossings. As an application we prove the nugatory crossing
conjecture for the negatively twisted, positive  Whitehead doubles of all knots. We also verify
the conjecture
for several families of pretzel knots and all genus one knots with up to 10 crossings.
\smallskip
\smallskip

%\smallskip
\smallskip

\noindent {\it Keywords:} cosmetic crossing, nugatory crossing, Thurston norm,  pretzel knot, Whitehead double.
\smallskip
%\smallskip

\smallskip
\smallskip

\noindent {\it Mathematics Subject Classification:} 57M25, 57M27, 57M10.
\end{abstract}

\maketitle

%\newpage

%\tableofcontents

\section{Introduction}\label{sec:intro}
A fundamental open question in knot theory is the question of when a crossing 
change
on an oriented knot changes the isotopy class of the knot. A crossing disc for an oriented knot $K\subset S^3$
is an embedded disc $D\subset S^3$
such
that $K$ intersects ${\rm int}(D)$ twice with
zero algebraic intersection number. A crossing change on $K$ can be achieved
by twisting $D$ or equivalently by performing appropriate Dehn surgery of $S^3$
along the crossing circle $\partial D$.
The crossing is called nugatory if and only if
$\partial D$ bounds an embedded disc in the complement of $K$. A non-nugatory crossing on a knot $K$ is called
cosmetic if the oriented knot $K'$ obtained
from $K$ by changing the crossing  is isotopic to $K$. 
Clearly, changing a nugatory crossing doesn't
change the isotopy class of a knot. The nugatory crossing conjecture (Problem 1.58  of  Kirby's list \cite{Kirbylist}) asserts that  the converse is true: if a crossing change on a knot $K$ yields a knot
isotopic to $K$ then the crossing is
nugatory. In other words, there are not any knots in $S^3$ that admit cosmetic crossings.

In the case that $K$ is the trivial knot an affirmative answer follows from a result
of Gabai \cite{gabai} and work of Scharlemann and Thompson \cite{st}. The conjecture is also known  to hold for  2-bridge knots by work of Torisu \cite{torisu},
and for fibered knots by work  of Kalfagianni \cite{kalfagianni}.
For knots of braid index three a weaker form of the conjecture, requiring  that the crossing change  happens on a
closed 3-braid diagram, is discussed by Wiley in \cite{3braids}.

In this paper we study cosmetic crossings on genus one knots and we
show  that  the Alexander polynomial and the homology of the double cover branching over the knot
provide obstructions to cosmetic crossings.  Our main results are the following.

\begin{named}{Theorem \ref{aslice}} Let $K$ be an oriented genus one knot.
If $K$ admits a cosmetic crossing then it is algebraically slice. In particular,
 $\Delta_K(t) \doteq f(t) f(t^{-1})$,
where $f(t)\in \ZZ[t]$ is a linear polynomial.
\end{named}

\begin{named}{Theorem \ref{doublecover}} Let $K$ be an oriented genus one knot and let $Y_K$  denote the double
cover of $S^3$ branching over $K$.
If $K$ admits a cosmetic crossing then the homology group $H_1(Y_K):=H_1(Y_K, {\ZZ})$ is  a finite cyclic group.
\end{named}
 
 Given a knot $K$ let $D_{+}(K, n)$ denote the $n$-twisted, positive clasped Whitehead double of $K$.
 As an application of 
Theorems \ref{aslice}  and \ref{doublecover} we prove that, for every $K$ and $n<0$,  the knot $D_{+}(K, n)$ satisfies the nugatory crossing conjecture (Corollary \ref{whitehead}).
We also prove the conjecture for several families of pretzel knots (Corollary \ref{pretzels}) and 22 out of the 23 genus one knots with
up to twelve crossings (see Section 5).

 \vskip 0.04in
 
 Throughout the paper we will discuss oriented knots in an oriented $S^3$ and we work in the smooth category.
\smallskip

{\bf Acknowledgements.} 
We thank Matt Hedden and Matt Rathbun for helpful discussions.
\smallskip

\section{Crossing changes and arcs on surfaces}
 In this section we use a result of Gabai \cite{gabai} to prove that a cosmetic crossing change on a knot $K$ can be realized by twisting along an
 essential arc on a minimum genus Seifert surface of $K$ (Proposition \ref{prop:minimum}). For genus one knots such an arc will be non-separating on the surface.
 In the next sections this will be our
 starting point for establishing connections between cosmetic crossings and knot invariants determined by Seifert matrices.
 \vskip 0.1in
 
Let $K$ be an oriented  knot in $S^3$ and $C$ be a crossing
of sign $\epsilon$, where $\epsilon=1$ or $-1$ according to whether $C$ is a 
positive or negative crossing (see Figure 1).
A \emph{crossing disc} of $K$ corresponding to $C$
is an embedded disc $D\subset S^3$
such
that $K$ intersects ${\rm int}(D)$ twice, once for each branch of $C$, with
zero algebraic intersection number. The boundary $L = \partial D$ is called a
\emph{crossing circle}.
Performing $({\textstyle {{-\epsilon}}})$-surgery on $L$, 
changes $K$ to another knot $K^{'}\subset S^3$ that
is obtained from $K$
by changing the crossing $C$.  

\begin{define} \label{nugat} A crossing  supported on a crossing circle $L$
of an oriented knot $K$ is called  \emph{nugatory} if 
$L = \partial D$ also bounds an embedded disc in the complement of $K$. This disc and $D$ form an
embedded
2-sphere that decomposes $K$ into a connected sum
where some of the summands may be trivial.
A non-nugatory crossing on a knot $K$ is called
\emph{cosmetic} if the oriented knot $K'$ obtained
from $K$ by changing $C$ is isotopic to $K$; that is, there exists an orientation-preserving diffeomorphism
$f: S^3\longrightarrow S^3$ with $f(K)=K'$.
\end{define}
\vskip .1in

\begin{figure}
\includegraphics[width=1.3in]{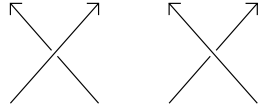}
\caption{Left: a positive crossing. Right: a negative crossing.}\label{cr_signs}
\end{figure}

For a link $J$ in $S^3$ we will use $\eta(J)$ to denote a regular neighborhood of $J$ in $S^3$
and we will use $M_J \colonequals \overline {S^3\setminus \eta(J)}$ to denote the closure of the complement of $\eta(J)$ in $S^3$.

\begin{lemma} \label{lem:irreducible} Let $K$ be an oriented knot and $L$ a crossing circle supporting a crossing $C$ of $K$.
Suppose that $M_{K \cup L}$ is reducible. Then $C$ is nugatory.

\end{lemma}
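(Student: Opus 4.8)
The plan is to use the irreducibility of $S^3$ to locate the reducing sphere precisely, and then to rebuild the crossing disc on one side of it. First I would take an essential $2$-sphere $S \subset M_{K\cup L}$, that is, one which does not bound a ball in $M_{K\cup L}$. Since $S^3$ is irreducible, $S$ bounds balls $B$ and $B'$ on its two sides in $S^3$. If one of these balls, say $B'$, were disjoint from $K\cup L$, then (after shrinking the neighborhood $\eta(K\cup L)$) we would have $B'\subset M_{K\cup L}$, so that $S$ bounds a ball in $M_{K\cup L}$, contrary to hypothesis. Hence each of $B$ and $B'$ meets $K\cup L$. As $S$ is disjoint from $K\cup L$ and each of $K$ and $L$ is connected, each of the two components lies entirely in one of the two balls; since both balls must be met, $S$ must separate $K$ from $L$. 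Say $K\subset\mathrm{int}(B)$ and $L\subset\mathrm{int}(B')$, so that $\overline{B'}\cap K=\emptyset$.

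Next I would use the crossing disc $D$, with $\partial D=L$ and with $D$ meeting $K$ in its two points of opposite sign, to manufacture a disc bounded by $L$ that avoids $K$. After an isotopy making $D$ transverse to $S$, the intersection $D\cap S$ is a finite collection of disjoint circles lying in $\mathrm{int}(D)$ (the boundary $L$ being disjoint from $S$). I would simplify these circles by innermost-disc surgery on the sphere: choose a circle $c$ of $D\cap S$ that is innermost on $S$, bounding a disc $E\subset S$ whose interior misses $D$; on $D$ the curve $c$ cuts off a subdisc $\Delta$, and replacing $\Delta$ by a copy of $E$ pushed slightly off $S$ yields a new embedded disc $\hat D$ with $\partial\hat D=L$ and strictly fewer intersection circles with $S$. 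Iterating, I obtain an embedded disc $\hat D$ with $\partial\hat D=L$ and $\hat D\cap S=\emptyset$.

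Finally, since $\hat D$ is connected, disjoint from $S$, and contains $L\subset\mathrm{int}(B')$ in its boundary, it lies entirely in the ball $\overline{B'}$; because $\overline{B'}\cap K=\emptyset$, the disc $\hat D$ is disjoint from $K$. Thus $L$ bounds an embedded disc in the complement of $K$, which is exactly the condition for $C$ to be nugatory in Definition \ref{nugat}. I expect the main obstacle to be the first step, namely arguing that the reducing sphere can be taken to separate the two components $K$ and $L$, rather than, say, to split $K$ itself; here the connectedness of each component together with the irreducibility of $S^3$ is what does the work. A secondary technical point is to confirm that the surgeries keep the disc embedded while only decreasing $\abs{D\cap S}$: because each $E$ is innermost on $S$ and is pushed off to the appropriate side, no new intersections are created and the boundary $L$ is never disturbed.
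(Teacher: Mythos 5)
Your proposal is correct and takes essentially the same route as the paper: the essential sphere must separate $K$ from $L$ (since $S^3$ is irreducible and both components meet the two complementary balls), after which $L$ bounds a disc avoiding $K$. The only difference is one of detail—where the paper simply notes that $L$ is unknotted and hence bounds a disc in the ball disjoint from $K$, you make this step explicit by performing innermost-circle surgery on the crossing disc $D$ to push it off the sphere, which is a fine (and standard) way to justify that claim.
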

\begin{proof} An essential 2-sphere in $M_{K \cup L}$ must separate
$\eta(K)$ and $\eta(L)$. Thus in $S^3$,  $L$ lies in a 3-ball disjoint from $K$.
Since $L$ is unknotted, it bounds a disc in the complement of $K$.
 \end{proof}
%\vskip 0.07in

Let $K$ be an oriented knot and $L = \partial D$ a crossing circle supporting a  crossing $C$.
Let $K'$ denote the knot obtained from $K$ by changing $C$.
Since the linking number of $L$ and $K$ is zero, $K$ bounds a Seifert surface in the complement of $L$. Let
$S$ be a Seifert surface that is of minimum genus among all such Seifert surfaces in the complement of $L$. Since $S$ is incompressible, after an isotopy we can arrange so that the closed components of  $S\cap D$ are homotopically essential in $D\setminus K$.
But then each such component is parallel to $\partial D$ on $D$  and by further modification we can arrange so that
$S\cap D$ is a single arc $\alpha$  that is properly embedded on $S$ as illustrated in Figure \ref{alpha}.
The surface
$S$
gives rise
to Seifert surfaces $S$ and $S'$ of $K$ and $K'$, respectively. 
\begin{figure}
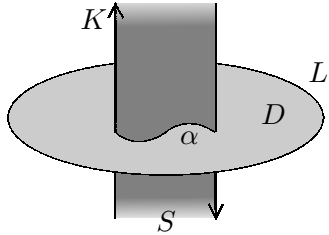
\caption{The crossing arc $\alpha = S \cap D$.}\label{alpha}
\end{figure}

\begin{prop} \label{prop:minimum} 
Suppose that $K$ is isotopic to $K'$. Then $S$ and $S'$ are Seifert surfaces of minimal genus for $K$ and $K'$, respectively.
\end{prop}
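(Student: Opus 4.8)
The plan is to realize both $S$ and $S'$ as cappings-off of a single surface living in the link complement $M_{K\cup L}$, and then feed that surface into Gabai's theorem on the behavior of the Thurston norm under Dehn filling. Throughout I may assume that $M_{K\cup L}$ is irreducible: if it were reducible, then by Lemma \ref{lem:irreducible} the crossing $C$ would be nugatory, which is not the situation of interest. Set $R = S \cap M_{K\cup L}$, a compact surface properly embedded in $M_{K\cup L}$ whose boundary is a longitude on $\partial \eta(K)$ and which is disjoint from the torus $T = \partial\eta(L)$, since $S$ is disjoint from $L$.

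Next I would record how the two knots and their surfaces arise from $R$. Because $L$ is unknotted, the trivial filling of $T$ recovers $(S^3, K)$, under which $R$ caps off to $S$, while the $(-\epsilon)$-filling of $T$ (the surgery realizing the crossing change) recovers $(S^3, K')$, under which $R$ caps off to $S'$. In particular $S$ and $S'$ are each homeomorphic to $R$ with a collar attached, so they have the same genus; set $h = g(S) = g(S')$. Because $S$ was chosen of minimal genus among all Seifert surfaces of $K$ lying in the complement of $L$, the surface $R$ is norm-minimizing (taut) in its relative homology class in $M_{K\cup L}$.

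The heart of the argument---and the step I expect to be the main obstacle---is the invocation of the result of Gabai \cite{gabai}. Gabai's work on sutured manifolds and foliations shows that a taut surface such as $R$, disjoint from the filling torus $T$, remains norm-minimizing after Dehn filling $T$ for every slope except at most one. The two slopes relevant here, the trivial slope (yielding $S$) and the crossing-change slope (yielding $S'$), are distinct; hence at most one of them can be exceptional, and so at least one of $S$, $S'$ is of minimal genus in $S^3$. The care required in this step is to confirm that $R$ is genuinely taut in the sutured-manifold sense and that the complementary sutured manifold remains taut under the non-exceptional filling, which is exactly what Gabai's theorem delivers once $M_{K\cup L}$ is irreducible.

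Finally I would use the hypothesis that $K$ is isotopic to $K'$ to upgrade ``at least one'' to ``both.'' An isotopy gives $g(K) = g(K')$. Combined with $g(S) = g(S') = h$ and the fact that one of the equalities $h = g(K)$, $h = g(K')$ holds by the previous step, we obtain $h = g(K) = g(K')$. Therefore $g(S) = g(K)$ and $g(S') = g(K')$, so both $S$ and $S'$ are minimal-genus Seifert surfaces for $K$ and $K'$, respectively, as claimed.
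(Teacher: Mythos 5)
Your argument is correct and is essentially the paper's own proof: both view $S$ as a taut (norm-minimizing) surface in the irreducible manifold $M_{K \cup L}$, invoke Gabai's result \cite[Corollary 2.4]{gabai} that such a surface remains taut under all but at most one Dehn filling of $\partial \eta(L)$, and then use the isotopy $K \cong K'$ (so $g(K)=g(K')$, while trivially $g(S)=g(S')$) to upgrade ``taut in at least one of $M_K$, $M_{K'}$'' to ``taut in both''; your final paragraph merely spells out the step the paper compresses into a single sentence. The one discrepancy is that the proposition as stated also covers nugatory crossings: the paper disposes of that case in one line, whereas you set the reducible case aside as not being of interest---a harmless omission, since your Gabai argument needs only irreducibility of $M_{K\cup L}$, not that the crossing is cosmetic.
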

\begin{proof} If the crossing is nugatory then $L$ bounds a disc in the complement of $S$ and the conclusion is clear.
Suppose the crossing is cosmetic; by Lemma \ref{lem:irreducible},
$M_{K \cup L}$ is irreducible.
We can consider the surface $S$ properly embedded in $M_{K \cup L}$
so that it is disjoint from 
$\partial \eta(L) \subset \partial M$.   
The assumptions
on irreducibility of $M_{K \cup L}$ and 
on the genus of $S$ imply that the foliation machinery of Gabai \cite{gabai} applies. In particular,
$S$ is taut in the Thurston norm. The manifolds
$M_K$ and $M_{K'}$ are obtained by Dehn filling of $M_{K \cup L}$ along $\partial \eta(L)$.
By \cite[Corollary 2.4] {gabai}, $S$ can fail to remain taut in the Thurston norm
(i.e. genus minimizing)
in at most one of $M_K$ and $M_{K'}$.   Since we have assumed that $C$ is a cosmetic crossing,  $M_K$ and $M_{K'}$
are homeomorphic (by an orientation preserving homeomorphism). Thus $S$  
remains taut in both of $M_K$ and $M_{K'}$. This implies that $S$
and $S'$ are Seifert surfaces of minimal genus for $K$ and $K'$, respectively.
\end{proof}

By Proposition \ref{prop:minimum}, a crossing change of a knot $K$ that produces an isotopic knot
corresponds to a properly embedded arc $\alpha$ on a minimum genus Seifert surface $S$ of $K$.
We observe the following.
\begin{lemma} \label{lem:essential} If $\alpha$ is inessential on $S$, then the crossing is nugatory.
\end{lemma}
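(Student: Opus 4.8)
The plan is to show that if $\alpha$ is inessential on $S$, then the crossing circle $L$ bounds a disc in the complement of $K$, which by definition makes the crossing nugatory. The starting point is the definition of an inessential arc: since $\alpha = S \cap D$ is a properly embedded arc on $S$ with both endpoints on $\partial S = K$, being inessential means that $\alpha$ is boundary-parallel, i.e.\ it cuts off a disc from $S$. More precisely, $\alpha$ together with a subarc $\beta$ of $K = \partial S$ bounds a subdisc $D_0 \subset S$ whose interior is disjoint from $\alpha$.

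First I would use this disc $D_0$ to understand how $K$ and $L$ sit relative to one another. The two points $K \cap D$ are precisely the two endpoints of $\alpha$, and they are joined along $K$ by the arc $\beta$. The key geometric step is to build an embedded disc bounded by $L$ that misses $K$. My approach would be to take the crossing disc $D$ and modify it using $D_0$: the arc $\alpha$ lies on $D$ (as $\alpha = S\cap D$), and since $\alpha$ is boundary-parallel on $S$, I can isotope the two strands of $K$ passing through $D$ so as to push them together across $D_0$, effectively cancelling the two intersection points of $K$ with $\mathrm{int}(D)$. Equivalently, one can use $D_0$ to guide an isotopy of $K$ that removes both punctures from $D$ simultaneously, after which $\partial D = L$ bounds a disc (an isotoped copy of $D$) disjoint from $K$.

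The step I expect to be the main obstacle is verifying that this cancellation can be carried out by an ambient isotopy of $K$ that leaves $L$ fixed and genuinely produces an \emph{embedded} disc for $L$ disjoint from $K$, rather than merely a singular or immersed one. The subtlety is that $D_0 \subset S$ may intersect the crossing disc $D$ in more than just the arc $\alpha$, and it may be linked with $L$ in a complicated way; one must check that after the arrangement of $S \cap D$ into the single arc $\alpha$ (as established in the discussion preceding Proposition \ref{prop:minimum}), the disc $D_0$ can be used cleanly. I would handle this by noting that $\mathrm{int}(D_0)$ is disjoint from $\alpha$ and lies on the incompressible surface $S$, so that the band sum or finger move guided by $D_0$ carries the two strands of $K$ together without introducing new intersections with $D$. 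Once the two intersection points are removed, $L$ bounds a disc in $S^3 \setminus K$, and the crossing is nugatory by Definition \ref{nugat}.
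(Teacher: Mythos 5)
Your proof is correct, and it hinges on the same object as the paper's proof: the disc cut off by the inessential arc $\alpha$ (your $D_0$, the paper's $E$). But the mechanism is genuinely different. The paper never isotopes $K$ at all: it considers the crossing disc as properly embedded in a regular neighborhood $\eta(S)$ of $S$ and observes that the boundary of a regular neighborhood of $E$ in $\eta(S)$ is a $2$-sphere containing the crossing disc; the complement of the interior of the crossing disc in this sphere is then an embedded disc bounded by $L$ whose interior misses $K$ --- a one-step direct construction. You instead run a Whitney-trick-style argument: an ambient isotopy of $K$, supported near $D_0$ and fixing $L$, slides $\beta$ across $D_0$ and just past $\alpha$, removing both punctures of $D$, after which (the preimage under the isotopy of) $D$ itself is the required disc. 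Both routes work; what the paper's construction buys is that there is nothing dynamic to verify, whereas your route must confirm (i) the isotopy fixes $L$, and (ii) the final push-off of the slid strand clears $D$ without re-intersecting it --- point (ii) uses that the strand of $K$ complementary to $\beta$ leaves $D$ on the \emph{same} side at both punctures, which holds here because $S$ is embedded and meets $D$ exactly in $\alpha$ (equivalently, because the two punctures have opposite sign). One correction: the ``main obstacle'' you flag is vacuous. Since $D_0 \subset S$ and the normalization preceding Proposition \ref{prop:minimum} gives $S \cap D = \alpha$ and $S \cap L = \emptyset$, you get $D_0 \cap D = \alpha$ exactly and $D_0 \cap L = \emptyset$; hence $\mathrm{int}(D_0)$ cannot meet $D$, the disc cannot be ``linked with $L$'' in any troublesome way, and a sufficiently small neighborhood of $D_0$ (the support of your isotopy) misses $L$ by compactness. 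Incompressibility of $S$ plays no role in this step.
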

\begin{proof} Recall that $\alpha$ is the intersection of a crossing disc $D$ with $S$.
Since $\alpha$ is inessential, it separates $S$ into two pieces, one of which is a disc $E$.
Consider $D$ as properly embedded in a regular neighborhood $\eta(S)$ of the surface $S$.
The boundary of a regular neighborhood of $E$ in $\eta (S)$ is a 2-sphere
that contains the crossing disc $D$. The complement of the interior of $D$ in that 2-sphere gives a disc
bounded by the crossing circle $L=\partial D$ with its interior disjoint from the  knot $K=\partial S$. 
\end{proof}

\section{Obstructing cosmetic crossings in genus one knots}
A  knot  $K$ is called \emph{algebraically slice}
if it admits a Seifert surface $S$ such that the Seifert form
$\theta: H_1(S)\times H_1(S)\longrightarrow \ZZ$
vanishes on a half-dimensional summand of $H_1(S)$; such a summand is called a \emph{metabolizer} of $H_1(S)$.
If $S$ has genus one, then the existence  of a metabolizer for $H_1(S)$ is equivalent to the existence
of an essential oriented simple closed curve on $S$ that has
zero self-linking number. If $K$ is algebraically slice, then the Alexander polynomial
$\Delta_K(t)$ is of the form $\Delta_K(t) \doteq f(t) f(t^{-1})$,
where $f(t)\in \ZZ[t]$ is a linear polynomial with integer coefficients
and $\doteq$ denotes equality up to multiplication by a unit in the ring of Laurent polynomials ${\ZZ}[t, \ t^{-1}]$.
For more details on these and other classical knot theory concepts we will use in this and the next section, the reader is referred to 
\cite{burde-zieschang:knots} or  \cite{lickorish:book}.

\begin{theorem}\label{aslice} Let $K$ be an oriented genus one knot.
If $K$ admits a cosmetic crossing, then it is algebraically slice. In particular,
there is a   linear polynomial $f(t)\in \ZZ[t]$ such that
 $\Delta_K(t)\doteq f(t) f(t^{-1})$.

\end{theorem}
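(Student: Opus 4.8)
The plan is to translate the cosmetic crossing into a rank-one modification of a Seifert matrix and then read off the vanishing of a self-linking number from the isotopy invariance of the Alexander polynomial. By Proposition \ref{prop:minimum} together with Lemma \ref{lem:essential}, a cosmetic crossing on $K$ is realized by twisting along an essential properly embedded arc $\alpha = S \cap D$ on a minimal genus Seifert surface $S$; since $K$ has genus one, $S$ is a once-punctured torus and $\alpha$ is non-separating. Let $V$ be the Seifert matrix of $K$ with respect to a basis of $H_1(S) \cong \ZZ^2$, and let $K'$, $S'$ be the knot and surface obtained by changing the crossing. By Proposition \ref{prop:minimum}, $S'$ is again genus one, and as an abstract surface $S'$ is $S$ re-embedded after the $(\mp 1)$-twist along $L = \partial D$, so $H_1(S') = H_1(S)$ with the same basis.

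First I would compute how the Seifert form changes under the twist. Let $q \colon H_1(S) \to \ZZ$ be the functional $q(u) = u \cdot \alpha$, the algebraic intersection number on $S$. Because $\alpha$ is a non-separating simple arc, its class is primitive in $H_1(S, \partial S)$, so by Lefschetz duality $q$ is surjective and the integer vector $q = (q(x), q(y))^{T}$ is primitive. A curve $u$ on $S$ meets the crossing disc $D$ algebraically $q(u)$ times, as does its positive pushoff, so the crossing change alters the relevant linking numbers by one full twist along $D$ and yields the rank-one update $V' = V + t\, q q^{T}$ for some $t = \pm 1$ (the exact sign is irrelevant). Choosing the basis $\{x,y\}$ so that $q = (1,0)^{T}$, which primitivity permits, this reads $V' = V + t\,E_{11}$, where $E_{11}$ has a single nonzero entry in the $(1,1)$ slot. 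Writing $V = \left(\begin{smallmatrix} a & b \\ c & d\end{smallmatrix}\right)$, only the $(1,1)$-entry changes, and the self-linking $d = \theta(y,y)$ of the class $y$ dual to $\alpha$ is left untouched.

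Next I would exploit that $K$ and $K'$ are isotopic, hence share the same normalized symmetric Alexander polynomial. Using the representative $\Delta(s) \doteq \det(V - sV^{T})$ together with $V' - sV'^{T} = (V - sV^{T}) + t(1-s)\,qq^{T}$, a direct expansion gives $\det(V' - sV'^{T}) - \det(V - sV^{T}) = t\,d\,(1-s)^{2}$. Both determinants are palindromic polynomials of the same shape with value $(b-c)^{2} = 1$ at $s = 1$ (recall that $\det(V - V^{T}) = 1$ forces $b - c = \pm 1$), so by the symmetry and normalization of the Alexander polynomial their representatives are literally equal; since $t = \pm 1 \neq 0$, this forces $d = 0$. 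Thus $y$ is a primitive class represented by an essential simple closed curve with zero self-linking, its span is a metabolizer for $\theta$, and $K$ is algebraically slice. Finally, setting $d = 0$ in $\det(V - sV^{T}) = -\,(b - sc)(c - sb)$ exhibits the factorization $\Delta_K(s) \doteq f(s)\,f(s^{-1})$ with the linear polynomial $f(t) = bt - c \in \ZZ[t]$.

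The main obstacle I expect lies in the second paragraph: pinning down the precise effect of the crossing change on the Seifert form, namely justifying that $V' = V + t\,qq^{T}$ relative to the common basis of $H_1(S) = H_1(S')$. This requires care because the crossing change reglues the ambient $S^3$ (via the twist along $L$), so one must verify that both curves and their pushoffs transform compatibly and that the correction is exactly the rank-one form $t\,qq^{T}$ dictated by the intersection data of $\alpha$. Equally essential is the primitivity of $q$, which is where the non-separability of $\alpha$, guaranteed by the genus-one hypothesis, is used; without it one could not normalize $q$ to $(1,0)^{T}$ and isolate the single entry $d$. Once these points are secured, the determinant computation and the conclusion are routine.
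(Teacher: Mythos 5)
Your proposal is correct and follows essentially the same route as the paper: both realize the cosmetic crossing as an essential non-separating arc on a genus one Seifert surface (via Proposition \ref{prop:minimum} and Lemma \ref{lem:essential}), observe that the crossing change alters only the $(1,1)$-entry of the Seifert matrix in a suitable basis, and then use invariance of the Alexander polynomial to force the remaining diagonal entry $d$ to vanish, giving the metabolizer. The only difference is packaging: the paper picks the basis geometrically (a curve $a_1$ meeting $\alpha$ once, completed to a symplectic basis) and writes the two matrices directly, whereas you encode the same fact as a rank-one update $V' = V + t\,qq^{T}$ and normalize $q=(1,0)^{T}$ by primitivity; your normalization argument at $s=1$ is in fact a slightly more careful justification of the paper's ``which easily implies that $d=0$.''
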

\begin{proof} Let $K'$ be a knot that is obtained from $K$ by a cosmetic crossing change $C$.
By Proposition \ref{prop:minimum}, there is a genus one Seifert surface $S$ 
such that a crossing disc supporting $C$ intersects $S$ in a properly embedded arc $\alpha \subset S$.
Let $S'$ denote the result of $S$ after the crossing change. 
 \begin{figure}
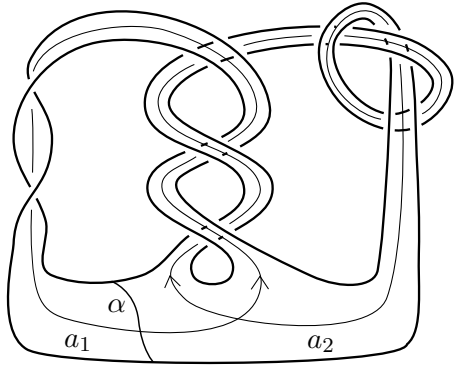
\caption{A genus one surface $S$ with generators $a_1$ and $a_2$ of $H_1 (S)$ and a non-separating arc $\alpha$.}\label{fig1}
\end{figure}
Since $C$ is a cosmetic crossing, by Lemma \ref{lem:essential},  $\alpha$ is essential.  Further, since the genus of $S$ is one, $\alpha$ is non-separating.
We can find a simple closed curve $a_1$ on $S$ that intersects $\alpha$ exactly once. Let $a_2$ be another simple closed curve so that
$a_1$ and $a_2$ intersect exactly once and the homology classes of $a_1$ and $a_2$ form a  symplectic basis for
 $H_1 (S) \cong {\ZZ} \oplus {\ZZ}$.  Note that $\{ a_1, a_2 \}$ form a corresponding basis of $H_1(S')$. See Figure \ref{fig1}.
 
 The Seifert matrices of $S$  and $S'$ with respect to these bases are
 $$V = \begin{pmatrix}a & b \\ c & d\end{pmatrix} \ \  \ \rm{and}  \ \ \   V'= \begin{pmatrix}a-\epsilon & b \\ c & d\end{pmatrix}$$
respectively, where $a,b,c, d\in {\ZZ}$ and $\epsilon=1$ or $-1$ according to whether $C$ is a positive or a negative crossing.
 The Alexander polynomials of $K$,  $K'$ are given by
 $$\Delta_K(t)\doteq \det( V-tV^T)=ad(1-t)^2-(b-ct)(c-tb),$$
 $$\Delta_{K'}(t)\doteq(a-\epsilon)d(1-t)^2-(b-ct)(c-tb).$$
Since $K$ and $K'$ are isotopic we must have $\Delta_K(t)\doteq\Delta_{K'}(t)$ which easily implies that $d={\rm lk}(a_2, \ a_2)=0$.
Hence $K$ is algebraically slice and
 $$\Delta_K(t)\doteq(b-ct)(c-tb)=(-t)(b-ct)(b-ct^{-1})\doteq (b-ct)(b-ct^{-1}).$$
 Setting $f(t)=b-ct$ we obtain $\Delta_K(t)\doteq f(t) f(t^{-1})$ as desired.  Note that since $\abs{b-c}$ is the intersection number between $a_1$ and $a_2$, by suitable orientation choices, we may assume that $c=b+1$.
\end{proof}

Recall that the determinant of a knot $K$ is defined by $\det(K)=\abs{\Delta_K(-1)}$.
As a corollary of Theorem \ref{aslice} we have the following.

\begin{corollary}\label{determinant} Let $K$ be a genus one knot. If $\det(K)$ is not a perfect square then
$K$ admits no cosmetic crossings.
\end{corollary}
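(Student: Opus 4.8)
The plan is to argue by contraposition and read everything off Theorem \ref{aslice}. Suppose, contrary to the desired conclusion, that $K$ admits a cosmetic crossing. Then Theorem \ref{aslice} immediately supplies a linear polynomial $f(t) = b - ct \in \ZZ[t]$, with $b,c\in\ZZ$, such that $\Delta_K(t) \doteq f(t) f(t^{-1})$. This is essentially the whole content being invoked; the corollary is a one-step consequence.

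Next I would evaluate at $t = -1$, using the definition $\det(K)=\abs{\Delta_K(-1)}$. The relation $\doteq$ means $\Delta_K(t) = u(t)\, f(t) f(t^{-1})$ for some unit $u(t) = \pm t^k$ of $\ZZ[t,t^{-1}]$, and since $u(-1) = \pm 1$, the indeterminacy in $\doteq$ becomes a mere sign after substituting $t=-1$ and disappears once we take absolute values. Because $f(-1) = b+c$ and $f\big((-1)^{-1}\big) = f(-1) = b+c$, we obtain
$$\det(K) = \abs{\Delta_K(-1)} = \abs{f(-1)}^2 = (b+c)^2,$$
a perfect square. (One may even note, using the normalization $c=b+1$ from the proof of Theorem \ref{aslice}, that $\det(K)=(2b+1)^2$ is an odd square, consistent with the fact that knot determinants are odd; but this refinement is not needed.)

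Thus whenever $K$ carries a cosmetic crossing its determinant is a perfect square; contrapositively, if $\det(K)$ is not a perfect square then $K$ admits no cosmetic crossing, which is the claim. I do not expect any genuine obstacle here: the lone point deserving care is verifying that the ambiguity encoded in $\doteq$ — multiplication by $\pm t^k$ — contributes only a factor of $\pm1$ at $t=-1$ and is annihilated by the absolute value, so that the perfect-square structure $f(-1)^2$ survives intact.
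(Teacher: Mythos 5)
Your proposal is correct and follows exactly the paper's own argument: invoke Theorem \ref{aslice} to get $\Delta_K(t)\doteq f(t)f(t^{-1})$ with $f$ linear, then evaluate at $t=-1$ to conclude $\det(K)=\abs{\Delta_K(-1)}=[f(-1)]^2$. Your extra care with the unit $\pm t^k$ contributing only a sign at $t=-1$ is a detail the paper leaves implicit, but it is the same proof.
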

\begin{proof}  Suppose that $K$ admits a cosmetic crossing. By Theorem \ref{aslice}
$\Delta_K(t)\doteq f(t) f(t^{-1})$,
where $f(t)\in \ZZ[t]$ is a linear polynomial. Thus, if $K$ admits cosmetic crossings we have
 $\det(K)=\abs{\Delta_K(-1)}=[f(-1)]^{2}$.
\end{proof}

\section{Further obstructions: homology of double covers}
Here we will derive further obstructions to cosmetic crossings in terms of the homology of the double branched cover
of the knot. Given $m\in {\ZZ}$ let ${\ZZ}_m={\ZZ}/{m{\ZZ}}$ denote the cyclic abelian group of order $\abs{m}$.
The following lemma will be useful to us.
\begin{lemma} \label{abelian} If $H$ denotes the abelian group given by the presentation
  \begin{equation*}
H\cong \left<
\begin{array}{l|l}
c_1, c_2 & 2x c_1+ (2y+1)c_2= 0\\
& (2y+1)c_1= 0 
\end{array}
\right> ,
\end{equation*}
then we have
\begin{enumerate}
\item $H\cong 0$, if $y=0$ or $y=-1$.
\item  $H \cong \mathbb{Z}_{d} \oplus \mathbb{Z}_{\frac{{(2y+1)}^2}{d}}$, if   $y \neq 0,\ -1$ and {\rm gcd}$(2x,\ 2y+1) = d$ where $1 \leq  d \leq 2y+1$.
\end{enumerate}
  \end{lemma}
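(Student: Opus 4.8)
The plan is to recognize $H$ as the cokernel of its relation matrix and to read off the structure from the Smith normal form. Writing the two relations in the generators $c_1, c_2$ gives the presentation matrix
$$R = \begin{pmatrix} 2x & 2y+1 \\ 2y+1 & 0 \end{pmatrix},$$
so that $H$ is the cokernel of $R \colon \ZZ^2 \to \ZZ^2$. Since the Smith normal form (and hence the invariant factors) is unchanged under transposition, whether one reads the relations as rows or columns is immaterial. The structure of $H$ is then governed by the two invariant factors $d_1 \mid d_2$ of $R$, which I would extract from the two standard invariants $d_1 = \gcd(\text{all entries of } R)$ and $d_1 d_2 = \abs{\det R}$.

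For part (1) I would simply substitute. When $y = 0$ we have $2y+1 = 1$, so the second relation reads $c_1 = 0$ and the first then forces $c_2 = 0$; when $y = -1$ we have $2y+1 = -1$, and the two relations again kill both generators. Hence $H \cong 0$ in these cases.

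For part (2), assume $y \ne 0,\,-1$, so that $2y+1$ is an odd integer with $\abs{2y+1} \ge 3$. The first invariant factor is $d_1 = \gcd(2x,\, 2y+1,\, 0) = \gcd(2x,\, 2y+1) = d$, while the determinant $\det R = -(2y+1)^2$ gives $d_1 d_2 = (2y+1)^2$, whence $d_2 = (2y+1)^2/d$. It then remains only to check that $\mathrm{diag}(d_1, d_2)$ really is the Smith normal form, i.e. that $d_1 \mid d_2$: since $d \mid 2y+1$, writing $2y+1 = dm$ yields $d_2 = dm^2$, which is visibly divisible by $d$. Therefore $H \cong \ZZ_d \oplus \ZZ_{(2y+1)^2/d}$, as claimed (with all gcd's and orders taken to be positive, so that the negative values $2y+1 < 0$ are handled by $\abs{2y+1}$).

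The computation is entirely elementary, so I do not expect a genuine obstacle; the only point requiring a moment's care is confirming the divisibility $d_1 \mid d_2$, which is exactly what guarantees that the two cyclic summands are presented in invariant-factor form rather than merely as an abstract group of the correct order.
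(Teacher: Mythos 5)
Your proof is correct: the paper itself gives no argument for this lemma (it is declared ``an easy linear algebra exercise left to the reader''), and your Smith normal form computation via the determinantal divisors $d_1=\gcd(2x,\,2y+1)$ and $d_1d_2=\abs{\det R}=(2y+1)^2$ is exactly the standard argument that fills in the omitted exercise. One small remark: your relation matrix $R$ is symmetric, so the care you take about transposition is automatically moot, and the divisibility $d_1\mid d_2$ is guaranteed by the invariant factor theorem in any case, though checking it by hand as you do is harmless.
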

  \begin{proof} The proof is an easy linear algebra exercise  left to the reader.
  \end{proof}

\begin{theorem}\label{doublecover} Let $K$ be an oriented genus one knot and let $Y_K$  denote the double
cover of $S^3$ branching over $K$.
If $K$ admits a cosmetic crossing, then the homology group $H_1(Y_K)$ is  a finite cyclic group.
\end{theorem}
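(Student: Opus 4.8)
The plan is to compute $H_1(Y_K)$ directly from a Seifert matrix, using the fact that the double branched cover of $S^3$ along a knot has first homology presented by $V + V^T$, where $V$ is any Seifert matrix for $K$. Since $K$ has genus one and admits a cosmetic crossing, I would invoke Theorem \ref{aslice} and its proof to pin down the Seifert matrix. In particular, the proof of Theorem \ref{aslice} shows that with respect to a suitable symplectic basis $\{a_1, a_2\}$ of $H_1(S)$ we may take
$$V = \begin{pmatrix} a & b \\ c & d \end{pmatrix}, \qquad d = 0, \qquad c = b+1,$$
so that $V$ is reduced to depend on just the two integers $a$ and $b$ (writing $c = b+1$).

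Next I would form the symmetrized matrix. With $d = 0$ and $c = b+1$ we get
$$V + V^T = \begin{pmatrix} 2a & b+c \\ b+c & 0 \end{pmatrix} = \begin{pmatrix} 2a & 2b+1 \\ 2b+1 & 0 \end{pmatrix}.$$
This is exactly a presentation matrix for $H_1(Y_K)$ on generators $c_1, c_2$ dual to $a_1, a_2$: the relations read $2a\,c_1 + (2b+1)c_2 = 0$ and $(2b+1)c_1 = 0$. Setting $x = a$ and $y = b$, this matches the presentation in Lemma \ref{abelian} verbatim. So the work is to recognize that the group is covered by the case analysis already carried out there.

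Finally I would apply Lemma \ref{abelian} to read off the structure of $H_1(Y_K)$. In case (1), when $y = 0$ or $y = -1$, we get $H_1(Y_K) \cong 0$, which is trivially finite cyclic. In case (2), $\gcd(2x, 2y+1) = d$ and $H_1(Y_K) \cong \ZZ_d \oplus \ZZ_{(2y+1)^2/d}$. Since $2y+1$ is odd, $d$ is odd and hence coprime to $(2y+1)^2/d$; by the Chinese Remainder Theorem the two cyclic summands amalgamate into a single cyclic group of order $(2y+1)^2$. In either case $H_1(Y_K)$ is finite cyclic, which is the desired conclusion. The one genuine point to verify carefully is the coprimality that lets the two summands merge — this is where the parity of $2y+1$ (forced by the normalization $c = b+1$ from the proof of Theorem \ref{aslice}) does the essential work, and it is the step I would flag as the crux of the argument.
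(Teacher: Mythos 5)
The step you flagged as the crux is exactly where the argument breaks down, and it breaks down irreparably. In case (2) of Lemma \ref{abelian}, $d = \gcd(2x,\, 2y+1)$ divides $2y+1$, so writing $2y+1 = dm$ gives $(2y+1)^2/d = dm^2$. Thus $d$ divides $(2y+1)^2/d$, so $\gcd\bigl(d,\ (2y+1)^2/d\bigr) = d$; oddness of $d$ is beside the point, and the two summands are coprime only when $d=1$. For $d>1$ the group $\ZZ_d \oplus \ZZ_{(2y+1)^2/d}$ is genuinely non-cyclic (e.g.\ $\ZZ_3 \oplus \ZZ_{27}$ when $2y+1=9$ and $d=3$). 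So your Chinese Remainder Theorem step is false, and with it the conclusion that case (2) always yields a cyclic group.

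Moreover, no repair is possible as long as you use only the Seifert matrix of $K$ itself: the statement your argument would prove --- that every algebraically slice genus one knot has cyclic $H_1(Y_K)$ --- is false. The pretzel knot $P(3,3,-3)=9_{46}$ is genus one and algebraically slice, with Seifert matrix $\begin{pmatrix} 3 & 2 \\ 1 & 0 \end{pmatrix}$, so $V+V^T = \begin{pmatrix} 6 & 3 \\ 3 & 0 \end{pmatrix}$ and $H_1(Y_K) \cong \ZZ_3 \oplus \ZZ_3$; indeed, Corollary \ref{matrix} and its application to $P(3,3,-3)$ in Section 5 exist precisely because such non-cyclic examples occur. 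What the paper does, and what your proposal omits, is to use the isotopy between $K$ and $K'$ a second time, now at the level of double branched covers: both $V+V^T$ (corner entry $2a$) and $V'+(V')^T$ (corner entry $2a-2\epsilon$) present the same group, since $H_1(Y_K)\cong H_1(Y_{K'})$. Put $d = \gcd(2a,\, 2b+1)$ and $d' = \gcd(2a-2\epsilon,\, 2b+1)$. If both exceeded $1$, the two groups would have invariant factors $\bigl(d,\ (2b+1)^2/d\bigr)$ and $\bigl(d',\ (2b+1)^2/d'\bigr)$, so the isomorphism would force $d=d'$; but then $d$ would divide $2a-(2a-2\epsilon) = 2\epsilon$, which is impossible for an odd divisor of $2b+1$ greater than $1$. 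Hence at least one of the two groups is cyclic, and the isomorphism then forces $d=d'=1$, so $H_1(Y_K)$ is cyclic. The content of the theorem lives entirely in this comparison between $K$ and $K'$, which your proposal discards after extracting the matrix form.
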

\begin{proof}
  Suppose that a genus one knot $K$ admits a cosmetic crossing yielding an isotopic knot  $K'$.
The proof of Theorem \ref{aslice} shows that $K$ and $K'$ admit Seifert matrices
of the form
 $$V = \begin{pmatrix}a & b\\ b+1 & 0\end{pmatrix} \ \  \ \rm{and}  \ \ \   V'= \begin{pmatrix}a-\epsilon & b \\ b+1 & 0\end{pmatrix} \eqno(1)$$
respectively, where $a,b \in {\ZZ}$ and $\epsilon=1$ or $-1$ according to whether $C$ is a positive or a negative crossing.
 In particular we have
 $$\Delta_K(t)\doteq \Delta_{K'}(t)\doteq b(b+1)(t^2+1)-(b^2+(b+1)^2)t. \eqno(2)$$
 Presentation matrices for $H_1(Y_K)$ and $H_1(Y_{K'})$ are given by
 $$V+V^T = \begin{pmatrix}2a & 2b+1\\ 2b+1 & 0\end{pmatrix}, \ \  \ \rm{and}  \ \ \  V'+  (V')^{T}= \begin{pmatrix} 2a-2\epsilon &2 b+1 \\ 2b+1 & 0\end{pmatrix}\eqno(3)$$
 respectively.
It follows that Lemma \ref{abelian} applies to both $H_1(Y_K)$ and $H_1(Y_{K'})$. By that lemma,
$H_1(Y_K)$ is either cyclic or $H_1(Y_K)\cong\mathbb{Z}_{d} \oplus \mathbb{Z}_{\frac{{(2b+1)}^2}{d}}$, 
with   $b \neq 0,\ -1$ and $ {\rm gcd}(2a,\ 2b+1) = d$ where $1 < d \leq 2b+1$.
Similarly, $H_1(Y_{K'})$ is either cyclic or $H_1(Y'_K)\cong\mathbb{Z}_{d'} \oplus \mathbb{Z}_{\frac{{(2b+1)}^2}{d'}}$,
with $ {\rm gcd}(2a-2\epsilon,\ 2b+1) = d'$ where $1 < d' \leq 2b+1$.
Since $K$ and $K'$ are isotopic, we have $H_1(Y_K)\cong H_1(Y_{K'})$. One can easily verify this
can only happen in the case that $\textrm{gcd}(2a,\ 2b+1) = \textrm{gcd}(2a-2\epsilon,\ 2b+1) = 1$ in which case $H_1 (Y_K)$ is cyclic.
 \end{proof}
 
 It is known that for an algebraically slice knot of genus one every minimum genus surface $S$ contains a metabolizer
 (compare \cite[Theorem 4.2]{livingston})
 After completing the metabolizer  to a basis of $H_1(S)$ we have a Seifert matrix $V$ as in (1) above.

 \begin{corollary} \label{matrix} Let $K$ be an oriented, algebraically slice knot of genus one.
 Suppose that a genus one Seifert surface of $K$ contains a metabolizer leading to a Seifert matrix $V$ as in (1)
 so that  $b\neq 0,-1$ and $\emph{gcd}(2a,\ 2b+1) \neq 1$.
 Then $K$ cannot admit a cosmetic crossing.
 \end{corollary}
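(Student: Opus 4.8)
The plan is to argue by contradiction, using Theorem \ref{doublecover} to rule out a cosmetic crossing once I have shown that the prescribed Seifert data forces $H_1(Y_K)$ to be non-cyclic. The entire argument is an assembly of facts already established, so the work is in checking that the pieces fit together cleanly rather than in any new idea.

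First I would record the standard fact, already used in the proof of Theorem \ref{doublecover}, that for any Seifert matrix $V$ of $K$ the symmetrized matrix $V + V^T$ is a presentation matrix for $H_1(Y_K)$. Applying this to the matrix $V$ of the form (1) supplied by the hypothesis gives
$$V + V^T = \begin{pmatrix} 2a & 2b+1 \\ 2b+1 & 0 \end{pmatrix},$$
which is exactly the presentation matrix appearing in Lemma \ref{abelian}, with $x = a$ and $y = b$. Since the hypothesis guarantees $b \neq 0, -1$, we are in case (2) of that lemma, so setting $d := \gcd(2a,\ 2b+1)$ we obtain
$$H_1(Y_K) \cong \mathbb{Z}_d \oplus \mathbb{Z}_{\frac{(2b+1)^2}{d}}.$$

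Next I would extract non-cyclicity from the hypothesis $\gcd(2a,\ 2b+1)\neq 1$, which says precisely that $d > 1$. Because $d$ divides the (odd) integer $2b+1$, writing $2b+1 = dk$ gives $(2b+1)^2/d = dk^2$, so $d$ divides both orders appearing in the decomposition above. Hence $\gcd\bigl(d,\ (2b+1)^2/d\bigr) = d > 1$, and since a direct sum $\mathbb{Z}_d \oplus \mathbb{Z}_n$ is cyclic exactly when its two orders are coprime, the group $H_1(Y_K)$ is not cyclic. To finish, if $K$ admitted a cosmetic crossing then Theorem \ref{doublecover} would force $H_1(Y_K)$ to be finite cyclic, contradicting what we have just shown; therefore $K$ admits no cosmetic crossing.

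I do not expect a genuine obstacle here, since every step reduces to bookkeeping. The one point meriting a sentence of care is the justification that the given Seifert matrix computes the same invariant that Theorem \ref{doublecover} constrains: the matrix in the corollary arises from an arbitrary metabolizing genus one surface and need not coincide with the one produced by a hypothetical cosmetic crossing. This causes no difficulty, however, because $V + V^T$ presents the topological invariant $H_1(Y_K)$ for \emph{any} Seifert surface of $K$, so the two descriptions necessarily agree and the contradiction is legitimate.
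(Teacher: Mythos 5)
Your proof is correct and follows essentially the same route as the paper: apply Lemma \ref{abelian} to the presentation matrix $V+V^T$ to conclude $H_1(Y_K)\cong \mathbb{Z}_d\oplus\mathbb{Z}_{(2b+1)^2/d}$ with $d=\gcd(2a,\,2b+1)>1$, observe this group is non-cyclic, and invoke Theorem \ref{doublecover} for the contradiction. The only difference is that you spell out two details the paper leaves implicit --- that $d$ divides both cyclic orders (so the sum is genuinely non-cyclic) and that $H_1(Y_K)$ is presented by $V+V^T$ for \emph{any} Seifert surface, so the metabolizing surface in the hypothesis need not be the one coming from the hypothetical crossing --- both of which are worthwhile clarifications but not a different argument.
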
 
 
\begin{proof} Let $d=\textrm{gcd}(2a,\ 2b+1) $. As in the proof of Theorem \ref{doublecover},
 we use Lemma \ref{abelian}  to conclude that  $H_1(Y_K)\cong\mathbb{Z}_{d} \oplus \mathbb{Z}_{\frac{{(2b+1)}^2}{d}}$
 and hence is non-cyclic unless $d=1$. Now the conclusion follows by Theorem \ref{doublecover}.
  \end{proof}

 \section{Applications}
\subsection{Twisted Whitehead doubles} Given a knot $K$ let $D_{+}(K, n)$ denote the $n$-twisted
Whitehead double of $K$ with a positive clasp and let  $D_{-}(K, n)$ denote the $n$-twisted
Whitehead double of $K$ with a negative clasp.

\begin{corollary}\label{whitehead}
Given a knot $K$,  the Whitehead double $D_{+}(K, n)$ admits no cosmetic crossing if either $n<0$ or $\abs{n}$ is odd.  Similarly  $D_{-}(K, n)$ admits no cosmetic crossing if either $n>0$ or $\abs{n}$ is odd.
\end{corollary}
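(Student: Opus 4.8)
The plan is to reduce the corollary to the determinant obstruction of Corollary \ref{determinant}. First I would observe that for every knot $K$ the twisted double $D_{+}(K,n)$ is a genus one knot: it bounds the once-punctured torus $S$ obtained from the standard clasp-and-band pattern in a solid torus neighborhood of $K$. Since $D_{+}(K,n)$ is a satellite of winding number zero, its Alexander polynomial---and hence its determinant $\det(D_{+}(K,n)) = \abs{\Delta_{D_{+}(K,n)}(-1)}$---depends only on the pattern and not on the companion $K$, so it can be computed once and for all from a Seifert matrix of $S$.

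Next I would record that Seifert matrix. With respect to the symplectic basis $\{a_1, a_2\}$ of $H_1(S)$, where $a_1$ is the core of the clasped band and $a_2$ runs once around the companion carrying the $n$ twists, the Seifert form is
\[
V = \begin{pmatrix} -1 & 1 \\ 0 & n \end{pmatrix},
\qquad\text{so that}\qquad
V + V^{T} = \begin{pmatrix} -2 & 1 \\ 1 & 2n \end{pmatrix}
\]
presents $H_1(Y_{D_{+}(K,n)})$ and $\det(D_{+}(K,n)) = \abs{\det(V+V^{T})} = \abs{4n+1}$. The corollary now becomes an elementary statement about when $\abs{4n+1}$ can be a perfect square. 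If $n<0$ then $\abs{4n+1} = 4\abs{n}-1 \equiv 3 \pmod 4$, while if $n>0$ is odd then $4n+1 \equiv 5 \pmod 8$. Since a perfect square is congruent to $0$ or $1$ modulo $4$, and to $0$, $1$ or $4$ modulo $8$, in neither case can $\det(D_{+}(K,n))$ be a perfect square. As the hypothesis ``$n<0$ or $\abs{n}$ odd'' always places us in one of these two cases, Corollary \ref{determinant} shows that $D_{+}(K,n)$ admits no cosmetic crossing.

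The statement for the negative clasp I would deduce by mirror symmetry. The mirror image of $D_{+}(K,n)$ is $D_{-}(\overline{K}, -n)$, a knot which admits a cosmetic crossing if and only if $D_{+}(K,n)$ does and whose determinant equals that of $D_{+}(K,n)$; replacing $n$ by $-n$ gives $\det(D_{-}(K,n)) = \abs{4n-1}$, which is $\equiv 3 \pmod 4$ when $n>0$ and $\equiv 5 \pmod 8$ when $n<0$ is odd. The same perfect-square obstruction then yields the conclusion for $D_{-}(K,n)$ exactly when $n>0$ or $\abs{n}$ is odd. (Alternatively one may simply repeat the Seifert-matrix computation with the clasp sign reversed.)

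I expect the only delicate point to be pinning down the Seifert matrix of the pattern correctly, since a sign error in the clasp or in the twisting convention would interchange the roles of $n>0$ and $n<0$ and break the match with the statement. The number theory is the true heart of the argument, and it also explains the hypotheses: when $n>0$ is even the determinant $4n+1$ may well be a perfect square---for example $\det(D_{+}(K,2)) = 9$---so the determinant obstruction is silent and these cases are (correctly) left out of the corollary.
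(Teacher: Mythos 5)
Your proposal is correct, but it takes a genuinely different route from the paper's. The paper splits the corollary into two cases handled by different tools: for $\abs{n}$ odd it compares the leading coefficient of $\Delta_n\doteq -n(t^2+1)+(2n+1)t$ against the form $b(b+1)(t^2+1)-(b^2+(b+1)^2)t$ that equation (2) forces on any genus one knot admitting a cosmetic crossing, concluding $\abs{n}=\abs{b(b+1)}$ must be even; for $n<0$ it notes that $D_{+}(K,n)$ is $S$-equivalent to the positively clasped $n$-twisted double of the unknot, which is a positive knot with non-zero signature \cite{positive}, hence not algebraically slice, and invokes Theorem \ref{aslice}. You instead evaluate everything at $t=-1$: your Seifert matrix is consistent with the paper's (both symmetrize to a matrix of determinant $-(4n+1)$ and give the same Alexander polynomial), so $\det(D_{+}(K,n))=\abs{4n+1}$, and your congruences ($\equiv 3 \pmod 4$ for $n<0$, $\equiv 5 \pmod 8$ for $n>0$ odd) show this is never a perfect square under the hypotheses, so Corollary \ref{determinant} applies. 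This is more uniform and more elementary: a single determinant computation replaces both of the paper's arguments, and in particular your mod $4$ observation disposes of all $n<0$ without any appeal to signatures of positive knots. What the paper's route buys is the sharper conceptual fact behind the $n<0$ case (failure of algebraic sliceness detected by signature, not just by determinant); but since Corollary \ref{determinant} is itself the specialization of Theorem \ref{aslice} at $t=-1$, both arguments rest on the same foundation. Two small points to tighten in a final write-up: Corollary \ref{determinant} is stated for genus one knots, and bounding a once-punctured torus only gives genus at most one, so you should note that under your hypotheses $\det(D_{+}(K,n))>1$, whence the knot is non-trivial and has genus exactly one; and your mirror-symmetry reduction for $D_{-}(K,n)$ is valid (mirroring carries crossing discs to crossing discs, nugatory crossings to nugatory crossings, and isotopies to isotopies, so cosmetic crossings correspond, while the determinant is mirror-invariant), though the paper simply redoes the computation with the clasp sign reversed.
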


\begin{proof} A Seifert surface of $D_{+}(K, n)$ obtained by pluming an $n$-twisted annulus 
with core $K$ and a Hopf band gives rise to a  Seifert matrix
$V_n = \begin{pmatrix}-1& 0 \\ -1 & n\end{pmatrix}$  \cite[Example 6.8]{lickorish:book}.
Thus the Alexander polynomial
is
of the form
$$\Delta_n\doteq -n(t^2+1)+(1+2n)t\eqno(4)$$
Suppose now that 
$D_{+}(K, n)$ admits a cosmetic crossing.  Then
$\Delta_n$ should be of the form shown in equation (2).
Comparing the leading coefficients in the expressions (2) and (4)
we obtain $\abs{n}=\abs{b(b+1)}$ which implies that $\abs{n}$ should be even.
We have shown that if $\abs{n}$ is odd then $D_{+}(K, n)$ admits no
cosmetic crossing changes. 
Suppose now that $n<0$. Since the Seifert matrix $V_n$ 
only depends only on $n$ and not on $K$, $D_{+}(K, n)$ is $S$-equivalent
to the positively clasped, $n$-twisted double of the unknot. This is a positive knot
and it has non-zero signature \cite{positive}. Hence $D_{+}(K, n)$ 
is not algebraically slice and by Theorem \ref{aslice} it cannot admit
cosmetic crossings.

A similar argument holds for $D_{-}(K, n)$.
\end{proof}
\begin{figure}
  \includegraphics[width=1.5in]{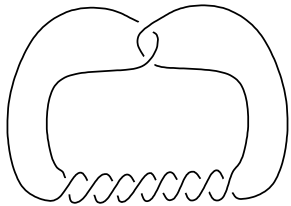}
  \caption{The  $(-4)$-twisted  negative clasped double of the unknot $D_{-}(O, -4)$.}
  \label{fig:twistknot}
\end{figure}

\subsection{Pretzel knots}
Let $K$ be a three string  pretzel knot $P(p,q,r)$ with $p,q$ and $r$  odd (see Figure \ref{pretzelknots}).  The knot  determinant is given
by $\det (K) = |pq+qr+pr|$ and if $K$ is non-trivial then it has genus one.
It is known that  $K$ is algebraically  slice  if and only if $pq+qr+pr=-m^2$, for some odd  $m\in {\ZZ}$ \cite{levine}.

\begin{corollary} \label{pretzels}The knot  $P(p,q,r)$ with $p,q$ and $r$ odd does not admit cosmetic crossings if one of the following is true:
\begin{enumerate}
\item $pq+qr+pr\neq -m^2$, for every odd $m\in {\ZZ}$.
\item $q+r=0$ and $\textrm{gcd}(p,\  q)\neq 1$.
\item $p+q=0$ and $\textrm{gcd}(p,\  r)\neq 1$.
\end{enumerate}
\end{corollary}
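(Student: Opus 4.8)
The plan is to treat the three cases separately, handling (1) in one line and reducing (3) to (2) by symmetry, so that essentially one computation does all the work. Case (1) is immediate from the machinery already in place: by Theorem \ref{aslice} any knot admitting a cosmetic crossing is algebraically slice, and by Levine's criterion \cite{levine} a nontrivial genus one pretzel knot $P(p,q,r)$ with $p,q,r$ odd is algebraically slice exactly when $pq+qr+pr=-m^2$ for some odd $m$. Hence the hypothesis in (1) rules out algebraic sliceness, and therefore cosmetic crossings.

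For case (2) I would first record the Seifert matrix attached to the standard genus one Seifert surface of $P(p,q,r)$, which can be taken in the form
$$V=\begin{pmatrix}\frac{p+q}{2} & \frac{q+1}{2}\\ \frac{q-1}{2} & \frac{q+r}{2}\end{pmatrix},$$
the key normalizing facts being that the off-diagonal entries satisfy $V_{12}+V_{21}=q$ with $\abs{V_{12}-V_{21}}=1$, and that the symmetrization $V+V^{T}=\left(\begin{smallmatrix}p+q & q\\ q & q+r\end{smallmatrix}\right)$ has determinant $pq+qr+pr$, so that $\abs{\det(V+V^{T})}=\det(K)$. Imposing $q+r=0$ then kills the lower-right entry: the second basis curve acquires zero self-linking, which simultaneously exhibits a metabolizer, recovers $pq+qr+pr=-q^{2}$ (so $K$ is algebraically slice with the odd integer $m=q$), and puts $V$ into the shape (1). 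After a harmless orientation change of one basis curve so that the off-diagonal entries occupy the slots $b$ and $b+1$ of (1), one reads off $2a=p+q$ and $2b+1=\pm q$.

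With these identifications the elementary identity $\gcd(2a,\ 2b+1)=\gcd(p+q,\ q)=\gcd(p,\ q)$ turns the hypothesis $\gcd(p,q)\neq 1$ into precisely the hypothesis $\gcd(2a,\ 2b+1)\neq 1$ of Corollary \ref{matrix}. The only remaining point is the side condition $b\neq 0,-1$ there, i.e. $q\neq\pm 1$; but $\gcd(p,\pm 1)=1$, so the assumption $\gcd(p,q)\neq 1$ already forces $q\neq\pm 1$. Corollary \ref{matrix} then applies verbatim and shows $P(p,q,r)$ admits no cosmetic crossing. Case (3) I would deduce from case (2) via the cyclic symmetry $P(p,q,r)\cong P(r,p,q)$: relabeling carries the hypotheses $p+q=0$ and $\gcd(p,r)\neq 1$ into exactly the pattern of (2).

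I expect the only real obstacle to be bookkeeping rather than mathematics, namely pinning down the correct Seifert matrix for the pretzel surface and normalizing it (through orientation and basis choices) into the precise template (1), so that the entries $b$ and $b+1$ sit in the correct off-diagonal positions and $\gcd(2a,2b+1)$ is computed against the right parameter. Once that normalization is fixed, the argument collapses to the identity $\gcd(p+q,q)=\gcd(p,q)$ together with the observation that the excluded values $q=\pm 1$ are incompatible with the gcd hypothesis, and no input beyond Theorem \ref{aslice} and Corollary \ref{matrix} is required.
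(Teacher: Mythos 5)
Your proposal is correct and follows essentially the same route as the paper: case (1) via Theorem \ref{aslice} together with Levine's characterization, and cases (2)--(3) via the pretzel Seifert matrix $V_{(p,q,r)} = \frac{1}{2}\left(\begin{smallmatrix}p+q & q+1\\ q-1 & q+r\end{smallmatrix}\right)$ fed into Corollary \ref{matrix} through the identity $\gcd(p+q,\ q)=\gcd(p,\ q)$. In fact your write-up is slightly more careful than the paper's, which leaves implicit both the normalization $2a=p+q$, $2b+1=\pm q$ and the verification that $\gcd(p,q)\neq 1$ forces $q\neq \pm 1$ (i.e.\ $b\neq 0,-1$), and which handles case (3) by repeating the computation rather than by your cyclic-symmetry reduction.
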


\begin{proof} In case (1) the result follows from Theorem \ref{aslice} and the discussion above.
For case (2) recall that there is a genus one surface for $P(p,q,r)$ for which
a Seifert matrix is $V_{(p,q,r)} = \frac{1}{2}\begin{pmatrix}{p+q} & {q+1} \\ {q-1} & q+r\end{pmatrix}$ \cite[Example 6.9]{lickorish:book}.
Suppose that   $q+r=0$. If  $\textrm{gcd}(p,\  q)\neq 1$,
then  $\textrm{gcd}(p+q,\  q)\neq 1$ and the conclusion in case (2) follows by
Corollary \ref{matrix}. Case (3) is similar.
\end{proof}

\begin{figure}
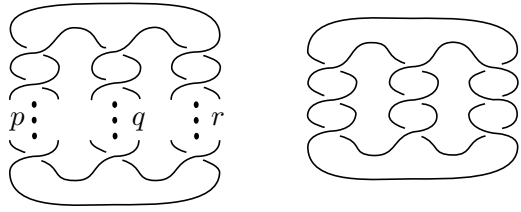
\caption{$P(p,q,r)$ with $p,q$ and $r$ positive and $ P(3,3,-3)$.}
\label{pretzelknots}
\end{figure}
\begin{table}[height=1.00in]
\begin{center}
\begin{tabular}{|c|c|c|c|c|c|}
\hline
$K$& $\det (K)$ & $K$ & $\det (K)$& $K$ & $\det (K)$  \\
\hline
$3_1$ &3& $9_2$ &15& ${11\textrm{a}}_{362}$ & 39 \\
\hline
$4_1$ & 5& $9_5$ & 23&${11\textrm{a}}_{363}$&35 \\
\hline
$5_2$ &7&$9_{35}$ &27&${\bf {11\textrm{\bf n}}_{139}}$ &{\bf 9}\\
\hline
${\bf 6_1}$ &{\bf 9}&${ \bf 9_{46}}$ &{\bf 9}&${11\textrm{n}}_{141}$& 21\\
\hline
$7_2 $& 11&$10_1$&17&${12\textrm{a}}_{803}$ & 21\\
\hline
$7_4$&15&${\bf 10_3}$ & {\bf 25}&${12\textrm{a}}_{1287}$ & 37\\
\hline
$8_1$ &13&${11\textrm{a}}_{247}$ &19&${12\textrm{a}}_{1166}$& 33\\
\hline
$8_3$&17&${11\textrm{a}}_{343}$ & 31&-&- \\
\hline
\end{tabular}
\label{twelve}
\end{center}
\vskip.13in
\caption{Genus one knots with at most 12 crossings.}
\end{table}

\subsection{Low crossing knots} 
Table 1,  obtained from KnotInfo \cite{knotinfo}, gives  the 23 knots of genus one with at most 12 crossings
with the values of their determinants.
There are four knots with square determinant. Thus
Corollary  \ref{determinant} excludes 
cosmetic crossings for all but the knots
$6_1$,  $9_{46},  10_3$ and $11\textrm{n}_{139}$ on Table 1. 
Now  $6_1$ and $10_3$ are 2-bridge knots and by \cite{torisu} they don't admit cosmetic crossings.
The knot
$9_{46}$  is isotopic to  $P(3,3,-3)$ of Figure \ref{pretzelknots} which
by Corollary 
\ref{pretzels}
 cannot have cosmetic crossings. 
 The knot  $K=11\textrm{n}_{139}$ is isotopic to 
 $P(-5,3,-3)$.
 As in the proof of Corollary
 \ref{pretzels} we calculate $H_1(Y_K)\cong {\ZZ}_9$.
 Thus Theorems \ref{aslice} and \ref{doublecover} fail to settle
 the nugatory crossing conjecture for $11\textrm{n}_{139}$. 
 Since $K$ is neither fibered nor a 2-bridge knot the results of \cite{kalfagianni, torisu}
 also fail to settle the conjecture.

  \bibliographystyle{hamsplain}
\bibliography{biblio}

\providecommand{\bysame}{\leavevmode\hbox to3em{\hrulefill}\thinspace}
\providecommand{\href}[2]{#2}
\begin{thebibliography}{10}

\bibitem{Kirbylist}
\emph{Problems in low-dimensional topology}, Geometric topology ({A}thens,
  {GA}, 1993), AMS/IP Stud. Adv. Math., vol.~2, Amer. Math. Soc., Providence,
  RI, 1997, Edited by Rob Kirby, pp.~35--473.

\bibitem{burde-zieschang:knots}
Gerhard Burde and Heiner Zieschang, \emph{Knots}, second ed., de Gruyter
  Studies in Mathematics, vol.~5, Walter de Gruyter \& Co., Berlin, 2003.

\bibitem{knotinfo}
J.~C. Cha and Charles Livingston, \emph{Knotinfo: Table of knot invariants},
  http://www.indiana.edu/$~$knotinfo.

\bibitem{gabai}
David Gabai, \emph{Foliations and the topology of {$3$}-manifolds. {III}}, J.
  Differential Geom. \textbf{26} (1987), no.~3, 479--536.

\bibitem{kalfagianni}
Efstratia. Kalfagianni, \emph{Cosmetic crossing changes of fibered knots}, J.
  Reine Angew. Math. (Crelle's Journal), to appear.

\bibitem{levine}
J.~Levine, \emph{Invariants of knot cobordism}, Invent. Math. 8 (1969),
  98--110; addendum, ibid. \textbf{8} (1969), 355.

\bibitem{lickorish:book}
W.~B.~Raymond Lickorish, \emph{An introduction to knot theory}, Graduate Texts
  in Mathematics, vol. 175, Springer-Verlag, New York, 1997.

\bibitem{livingston}
Charles Livingston, \emph{The concordance genus of knots}, Algebr. Geom. Topol.
  \textbf{4} (2004), 1--22.

\bibitem{positive}
J{\'o}zef~H. Przytycki, \emph{Positive knots have negative signature}, Bull.
  Polish Acad. Sci. Math. \textbf{37} (1989), no.~7-12, 559--562 (1990).

\bibitem{st}
Martin Scharlemann and Abigail Thompson, \emph{Link genus and the {C}onway
  moves}, Comment. Math. Helv. \textbf{64} (1989), no.~4, 527--535.

\bibitem{torisu}
Ichiro Torisu, \emph{On nugatory crossings for knots}, Topology Appl.
  \textbf{92} (1999), no.~2, 119--129.

\bibitem{3braids}
Chad Wiley, \emph{Nugatory crossings in closed 3-braid diagrams}, ProQuest LLC,
  Ann Arbor, MI, 2008, Thesis (Ph.D.)--University of California, Santa Barbara.

\end{thebibliography}

\end{document}